\numberwithin{equation}{section}
\newtheorem{theorem}{Theorem}[section]
\newtheorem{proposition}[theorem]{Proposition}
\newtheorem{remark}{Remark}[section]
\newtheorem{example}{Example}[section]
\newcommand{\OMIT}[1]{{\bf [OMIT:} #1 \ {\bf --- end OMIT] }}  
   \renewcommand{\OMIT}[1]{}            
\newcommand{\RR}{{\mathbb{R}}}
\newcommand{\ZZ}{{\mathbb{Z}}}
\newcommand{\vecone}{{\bf 1}}
\newcommand{\veczero}{{\bf 0}}
\newcommand{\dom}{{\rm dom\,}}
\newcommand{\unitvec}[1]{e\sp{#1}}
\newcommand{\conv}{\Box\,}
\newcommand{\finbox}{\hspace*{\fill}$\rule{0.2cm}{0.2cm}$}
\newcommand{\todaye}{\the\year/\the\month/\the\day}
\newcommand{\Lnat}{{L$^{\natural}$}}
\newcommand{\Mnat}{{M$^{\natural}$}}
\newcommand{\Rinf}{\overline{\RR}}
\newcommand{\Rmpinf}{\RR \cup \{ -\infty, +\infty\}}
\def\calF{{\cal F}}
\def\calT{{\cal T}}
\begin{document}

\title{On Fundamental Operations for \\  Multimodular Functions%
\thanks{
This work was supported by CREST, JST, Grant Number JPMJCR14D2, Japan, and
JSPS KAKENHI Grant Numbers 17K00037, 26280004.}
}

\author{
Satoko Moriguchi%
\thanks{Department of Economics and Business Administration,
Tokyo Metropolitan University, 
satoko5@tmu.ac.jp}
\ and 
Kazuo Murota%
\thanks{Department of Economics and Business Administration,
Tokyo Metropolitan University, 
murota@tmu.ac.jp}
}

\date{May 6, 2018 / June 21, 2019}

\maketitle

\begin{abstract}
Multimodular functions, primarily used in the literature of 
queueing theory, discrete-event systems, and operations research, 
constitute a fundamental function class in discrete convex analysis.
The objective of this paper is to clarify the properties of multimodular functions
with respect to fundamental operations
such as permutation and scaling of variables, projection (partial minimization) and convolution.
It is shown, in particular, that the class of multimodular functions
is stable under projection under a certain natural condition 
on the variables to be minimized,
and the convolution of two multimodular functions
is not necessarily multimodular, even in the special
case of the convolution of a multimodular function
with a separable convex function.
\end{abstract}

{\bf Keywords}:
Discrete convex analysis,  Multimodular function, {\rm L}-convex function, 
Projection, Minkowski sum,  Infimal convolution



\section{Introduction}
\label{SCintro}

Multimodular functions,
due to Hajek \cite{Haj85},
have been used as a fundamental tool 
in the literature of queueing theory, discrete-event systems, and operations research
\cite{AGH00,AGH03,FHS17,GY94mono,KS03,LY14,SW93markov,dWvS00,WS87netque,ZL10}.
In connection to discrete convex analysis \cite{Fuj05book,Mdca98,Mdcasiam,Mbonn09},
multimodularity can be regarded as a variant of \Lnat-convexity
in the sense that a function 
$f: \ZZ\sp{n} \to \RR \cup \{ +\infty\}$
is multimodular if and only if it can be represented as
$f(x) = g(x_{1}, \ x_{1}+x_{2}, \ \ldots, \  x_{1}+ \cdots + x_{n})$
for some \Lnat-convex function $g$ \cite{Mmult05}.

Various operations can be defined for discrete functions  
$f: \mathbb{Z}\sp{n} \to \mathbb{R} \cup \{ +\infty  \}$.
With changes of variables 
we can define operations such as 
an origin shift  $f(x) \mapsto f(x+b)$,
a sign inversion of variables $f(x) \mapsto f(-x)$, 
a permutation of variables 
 $f(x) \mapsto f(x_{\sigma(1)}, x_{\sigma(2)}, \ldots, x_{\sigma(n)})$,
and 
a scaling of variables $f(x) \mapsto f(s x)$
with a positive integer $s$.
With arithmetic or numerical operations on function values
we can define 
nonnegative multiplication of function values $f(x) \mapsto a f(x)$ with $a \geq 0$,
addition of a linear function
$f(x) \mapsto f(x) + \sum_{i=1}\sp{n} c_{i} x_{i}$
with  $c \in \RR\sp{n}$,
projection%
\footnote{
Here $x=(y,z)$ up to a permutation of components.
See (\ref{fsetprojdef}) in Section \ref{SCproj} 
for the precise meaning of the notation.
}  
(partial minimization) $f(x) \mapsto  \inf_{z} f(y,z)$,
sum $f_{1} + f_{2}$ 
of two functions $f_{1}$ and $f_{2}$,
convolution
$(f_{1} \conv f_{2})(x) =
 \inf\{ f_{1}(y) + f_{2}(z) \mid x= y + z, \  y, z \in \ZZ\sp{n} \}$
of two functions $f_{1}$ and $f_{2}$,
etc.

Stability of discrete convexity under these operations
has been investigated for many function classes 
in discrete convex analysis,
such as 
\Lnat-convex functions,
\Mnat-convex functions,
and integrally convex functions
\cite{KMT07jump,MM17projcnvl,MMTT17proxIC,MMTT17dmpc,Mdcasiam,MS01rel}.
For multimodular functions, however, no systematic study
has been made, though there are results and observations scattered 
in the literature.

The objective of this paper is to investigate 
fundamental operations for multimodular functions
with particular interest in their connection to those for \Lnat-convex functions.
By compiling known and new results we shall arrive at a complete comparison 
of various kinds of discrete convexity with respect to fundamental operations,
as presented in Table \ref{TBoperation2dcfnZ} at the end of the paper.

This paper is organized as follows.
Section~\ref{SCmmfnprelim} is a review of relevant results on multimodular functions.
Section~\ref{SCchangevar} deals with operations defined by changes of variables and 
Section~\ref{SCfnvaloper} treats operations defined by arithmetic or numerical operations on functions values,
such as restriction, projection, and convolution.
In Section~\ref{SCconclrem} we conclude the paper 
with a table to compare the 
major classes of discrete convex functions.

\section{Multimodular Functions}
\label{SCmmfnprelim}

We consider functions defined on integer lattice points, 
$f: \ZZ\sp{n} \to \Rinf$,
where $\Rinf = \RR \cup \{ +\infty\}$
and the function may possibly take $+\infty$.
The {\em effective domain} of $f$ means the set of $x$
with $f(x) <  +\infty$ and is denoted 
by $\dom f =   \{ x \in \ZZ\sp{n} \mid  f(x) < +\infty \}$.

A function $f: \ZZ\sp{n} \to \Rinf$ is said to be {\em submodular} 
if it satisfies
\[
f(x) + f(y) \geq f(x \vee y) + f(x \wedge y)
\]
for all $x, y \in \ZZ\sp{n}$, 
where
$x \vee y$ and $x \wedge y$ denote,
respectively, the vectors of componentwise maximum and minimum of $x$ and $y$, 
i.e.,
\[ 
  (x \vee y)_{i} = \max(x_{i}, y_{i}),
\quad
  (x \wedge y)_{i} = \min(x_{i}, y_{i})
\qquad (i =1,2,\ldots, n).
\]

Let $\unitvec{i}$ denote the $i$th unit vector for $i =1,2,\ldots, n$,
and $\calF \subseteq \ZZ\sp{n}$ be the set of vectors defined by
\begin{equation} \label{multimodirection1}
\calF = \{ -\unitvec{1}, \unitvec{1}-\unitvec{2}, \unitvec{2}-\unitvec{3}, \ldots, 
  \unitvec{n-1}-\unitvec{n}, \unitvec{n} \} .
\end{equation}
A finite-valued function $f: \ZZ\sp{n} \to \RR$
is said to be {\em multimodular}
if it satisfies
\begin{equation} \label{multimodulardef1}
 f(z+d) + f(z+d') \geq   f(z) + f(z+d+d')
\end{equation}
for all $z \in \dom f$ and all distinct $d, d' \in \calF$
\cite{AGH00,Haj85}.
It is known \cite[Proposition 2.2]{Haj85} that
$f: \ZZ\sp{n} \to \RR$
is multimodular if and only if the function 
$\tilde f: \ZZ\sp{n+1} \to \RR$ 
defined by 
\begin{equation} \label{multimodular1}
 \tilde f(x_{0}, x) = f(x_{1}-x_{0},  x_{2}-x_{1}, \ldots, x_{n}-x_{n-1})  
 \qquad ( x_{0} \in \ZZ, x \in \ZZ\sp{n})
\end{equation}
is submodular in $n+1$ variables. 
This characterization enables us to define multimodularity 
for a function that may take the infinite value $+\infty$.
That is, we say that a function $f: \ZZ\sp{n} \to \Rinf$ with $\dom f \not= \emptyset$
is multimodular if the function 
$\tilde f: \ZZ\sp{n+1} \to \Rinf$
associated with $f$ by (\ref{multimodular1}) is submodular.

A function $g : \ZZ\sp{n} \to \Rinf$ with $\dom g \not= \emptyset$
is said to be {\em L$\sp{\natural}$-convex}%
\footnote{
``L$\sp{\natural}$-convex'' should be read ``ell natural convex.''
} 
if it has the property called ``discrete midpoint convexity,''
i.e., if it satisfies
\begin{equation} \label{midptcnv}
 g(p) + g(q) \geq
   g \left(\left\lceil \frac{p+q}{2} \right\rceil\right) 
  + g \left(\left\lfloor \frac{p+q}{2} \right\rfloor\right) 
\end{equation}
for all $p, q \in \ZZ\sp{n}$, 
where, for $z \in \RR$ in general, 
$\left\lceil  z   \right\rceil$ 
denotes the smallest integer not smaller than $z$
(rounding-up to the nearest integer)
and $\left\lfloor  z  \right\rfloor$
the largest integer not larger than $z$
(rounding-down to the nearest integer),
and this operation is extended to a vector
by componentwise applications.
It is known \cite{Mdcasiam} that
$g: \ZZ\sp{n} \to \Rinf$
with $\dom g \not= \emptyset$
is \Lnat-convex
if and only if the function 
$\tilde g: \ZZ\sp{n+1} \to \Rinf$
defined by 
\begin{equation}\label{lfnlnatfnrelation}
 \tilde g(p_{0},p) = g(p - p_{0} \vecone)
 \qquad ( p_{0} \in \ZZ, p \in \ZZ\sp{n})
\end{equation}
is submodular in $n+1$ variables,
where $\vecone=(1,1,\ldots,1)$. 
A function 
$h(q_{0}, q_{1}, \ldots, q_{n} )$
with $\dom h \not= \emptyset$ is called {\em {\rm L}-convex}
if it is submodular on $\ZZ\sp{n+1}$ and there exists
$r \in \RR$ such that 
\begin{equation}\label{linear}
h(q + \vecone) = h(q) +  r
\end{equation}
for all $q = (q_{0}, q_{1}, \ldots, q_{n} ) \in \ZZ\sp{n+1}$.
If $h$ is {\rm L}-convex,
the function
$h(0, q_{1}, \ldots, q_{n} )$
is \Lnat-convex, and any \Lnat-convex function arises in this way.
The function $\tilde g$ in (\ref{lfnlnatfnrelation}) derived from an \Lnat-convex function $g$  
is an {\rm L}-convex function, and we have $g(p)  =  \tilde g(0,p)$.

Multimodularity and L$\sp{\natural}$-convexity
have the following close relationship.

\begin{theorem}[\cite{Mmult05,Mdcaprimer07}] \rm \label{THmmfnlnatfn}
A function $f: \ZZ\sp{n} \to \Rinf$
is multimodular if and only if the function $g: \ZZ\sp{n} \to \Rinf$
defined by
\begin{equation} \label{mmfnGbyF}
 g(p) = f(p_{1}, \  p_{2}-p_{1}, \  p_{3}-p_{2}, \ldots, p_{n}-p_{n-1})  
 \qquad ( p \in \ZZ\sp{n})
\end{equation}
is L$\sp{\natural}$-convex.
\end{theorem}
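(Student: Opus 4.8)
The plan is to reduce the theorem to the two submodularity characterizations already recorded above. Hajek's Proposition~2.2 says that $f$ is multimodular if and only if the lift $\tilde f$ of \eqref{multimodular1} is submodular on $\ZZ^{n+1}$, and the quoted fact about \Lnat-convexity says that $g$ is \Lnat-convex if and only if the lift $\tilde g$ of \eqref{lfnlnatfnrelation} is submodular on $\ZZ^{n+1}$. So it suffices to show that, whenever $f$ and $g$ are related by \eqref{mmfnGbyF}, the two lifts $\tilde f$ and $\tilde g$ are \emph{literally the same function} on $\ZZ^{n+1}$; then the theorem follows by chaining the two equivalences in both directions. I also need the elementary remark that $p \mapsto (p_1,\, p_2-p_1,\, \ldots,\, p_n-p_{n-1})$ is a bijection of $\ZZ^{n}$ onto itself (its inverse is the partial-sum map), so that \eqref{mmfnGbyF} is a one-to-one correspondence between functions and, in particular, $\dom f \neq \emptyset \iff \dom g \neq \emptyset$, matching the nonempty-domain hypotheses in the two definitions.

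First I would take $g$ to be defined from $f$ by \eqref{mmfnGbyF} and compute $\tilde g$. Substituting $p - p_0 \vecone$ for the argument of $g$ in \eqref{mmfnGbyF}, the $i$th coordinate handed to $f$ becomes $(p_i - p_0) - (p_{i-1} - p_0) = p_i - p_{i-1}$ for $i \geq 2$, while the first coordinate is $p_1 - p_0$; the shifts $-p_0$ telescope away except in the first slot. Hence
\[
 \tilde g(p_0, p) = g(p - p_0 \vecone) = f(p_1 - p_0, \ p_2 - p_1, \ \ldots, \ p_n - p_{n-1}),
\]
and comparison with \eqref{multimodular1} shows the right-hand side is exactly $\tilde f(p_0, p)$. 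Thus $\tilde f$ and $\tilde g$ coincide on $\ZZ^{n+1}$.

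With this identity the argument closes: $f$ multimodular $\iff$ $\tilde f$ submodular on $\ZZ^{n+1}$ $\iff$ $\tilde g$ submodular on $\ZZ^{n+1}$ $\iff$ $g$ \Lnat-convex, where the two outer equivalences are the quoted characterizations and the middle one is the identity $\tilde f = \tilde g$ (together with the domain bijection).

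I do not expect a genuine obstacle here: the only points needing care are the index bookkeeping in the telescoping cancellation and verifying that the nonempty-domain conditions in the two definitions are matched up by the bijection. If one wished to avoid invoking the lift characterizations, one could instead try to translate the defining inequalities \eqref{multimodulardef1} (over the pairs $d, d' \in \calF$) directly into the discrete midpoint inequalities \eqref{midptcnv} for $g$; this is more laborious, since one must then argue that \eqref{midptcnv} for \emph{all} $p, q$ is already implied by the finitely many local inequalities coming from $\calF$, so the lift route is preferable.
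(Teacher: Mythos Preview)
The paper does not actually supply a proof of this theorem: it is stated with a citation to \cite{Mmult05} and closed immediately with a \finbox, i.e., it is quoted as a known result. Your argument is correct. The telescoping computation showing that $\tilde g(p_{0},p) = g(p - p_{0}\vecone) = f(p_{1}-p_{0},\, p_{2}-p_{1},\, \ldots,\, p_{n}-p_{n-1}) = \tilde f(p_{0},p)$ identifies the two lifts, and then the chain of equivalences via \eqref{multimodular1} and \eqref{lfnlnatfnrelation} is exactly right; the bijection $p \mapsto Dp$ handles the nonempty-domain side conditions as you note. This is essentially the argument in the cited source \cite{Mmult05}, so there is nothing to contrast here beyond the fact that you have written out what the paper merely quotes.
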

\begin{proof}
By definition, the multimodularity of $f$ is equivalent to
the submodularity of 
$\tilde f$ in (\ref{multimodular1}).
Since $\tilde f$ satisfies (\ref{linear}) for $r=0$, 
the submodularity of $\tilde f$ is equivalent to
the L-convexity of $\tilde f$.
On the other hand,
since $\tilde f(p_0 , p ) = g(p-p_0 \vecone)$,
the L-convexity of $\tilde f$ is equivalent to 
the L$\sp{\natural}$-convexity of $g$
(cf., (\ref{lfnlnatfnrelation})).
\end{proof}

Note that the relation (\ref{mmfnGbyF}) between $f$ and $g$ can be rewritten as
\begin{equation} \label{mmfnFbyG}
 f(x) = g(x_{1}, \  x_{1}+x_{2}, \  x_{1}+x_{2}+x_{3}, 
   \ldots, x_{1}+ \cdots + x_{n})  
 \qquad ( x \in \ZZ\sp{n}) .
\end{equation}
Using a bidiagonal matrix 
$D=(d_{ij} \mid 1 \leq i,j \leq n)$ defined by
\begin{equation} \label{matDdef}
 d_{ii}=1 \quad (i=1,2,\ldots,n),
\qquad
 d_{i+1,i}=-1 \quad (i=1,2,\ldots,n-1),
\end{equation}
we can express (\ref{mmfnGbyF}) and (\ref{mmfnFbyG}) 
more compactly  as $g(p)=f(Dp)$ and $f(x)=g(D\sp{-1}x)$, respectively. 
The matrix $D$ is unimodular, and its inverse
$D\sp{-1}$ is an integer matrix
with $(D\sp{-1})_{ij}=1$ for $i \geq j$ and 
$(D\sp{-1})_{ij}=0$ for $i < j$.
For $n=4$, for example, we have
\[
D = {\small
\left[ \begin{array}{rrrr}
1 & 0 & 0 & 0 \\
-1 & 1 & 0 & 0 \\
0 & -1 & 1 & 0 \\
0 & 0 & -1 & 1 \\
\end{array}\right]},
\qquad
D\sp{-1} = {\small
\left[ \begin{array}{rrrr}
1 & 0 & 0 & 0 \\
1 & 1 & 0 & 0 \\
1 & 1 & 1 & 0 \\
1 & 1 & 1 & 1 \\
\end{array}\right]}.
\]

\begin{remark} \rm \label{RMmmset}
The {\em indicator function} of a set $S \subseteq \ZZ\sp{n}$
is the function 
$\delta_{S}: \ZZ\sp{n} \to \{ 0, +\infty \}$
defined by
$\delta_{S}(x)  =
   \left\{  \begin{array}{ll}
    0            &   (x \in S) ,      \\
   + \infty      &   (x \not\in S) . \\
                      \end{array}  \right.$ 
A set $S$ is called an {\em \Lnat-convex set}
if its indicator function $\delta_{S}$ is \Lnat-convex.
Similarly, let us call a set $S$ a {\em multimodular set}
if its indicator function $\delta_{S}$ is multimodular. 
A multimodular set $S$ can be represented as 
$S = \{ D p \mid p \in T \}$
for some \Lnat-convex set $T$,
where $T$ is uniquely determined from $S$ as 
$T = \{ D\sp{-1} x \mid x \in S \}$.
It follows from (\ref{mmfnGbyF}) that
 the effective domain of a multimodular function is a multimodular set.
\finbox
\end{remark}

\begin{remark} \rm \label{RMmmfnn2mnat}
For functions in two variables,
multimodularity is the same as \Mnat-convexity.
That is, a function $f: \ZZ\sp{2} \to \Rinf$
is multimodular if and only if it is \Mnat-convex.
This fact follows easily from the definition
or from Theorem \ref{THmmfnlnatfn} and the relation between 
\Lnat-convex and \Mnat-convex functions for $n=2$.
See \cite{Mdcasiam} for the definition of \Mnat-convex functions.
\finbox
\end{remark}

A function
$f: \ZZ^{n} \to \Rinf$
in $x=(x_{1}, x_{2}, \ldots,x_{n}) \in \ZZ^{n}$
is called  {\em separable {\rm (}discrete{\rm )} convex}
if it can be represented as
$f(x) = \varphi_{1}(x_{1}) + \varphi_{2}(x_{2}) + \cdots + \varphi_{n}(x_{n})$
with univariate functions
$\varphi_{i}: \ZZ \to \Rinf$ satisfying 
$\varphi_{i}(t-1) + \varphi_{i}(t+1) \geq 2 \varphi_{i}(t)$
for all $t \in \ZZ$.

\begin{proposition} \rm \label{PRsepmmfn}
A separable convex function is multimodular.
\end{proposition}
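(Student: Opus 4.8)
The plan is to verify the defining inequality (\ref{multimodulardef1}) directly, exploiting separability to reduce it to the one-dimensional discrete convexity of the summands. Write $f(x)=\sum_{i=1}\sp{n}\varphi_{i}(x_{i})$ with each $\varphi_{i}$ discrete convex, fix $z\in\dom f$, and fix two distinct vectors $d,d'\in\calF$. Since $f(z)<+\infty$ forces $z_{i}\in\dom\varphi_{i}$ for every $i$, and since each of $f(z+d)$, $f(z+d')$, $f(z)$, $f(z+d+d')$ splits into a sum over the coordinates, it suffices to prove the coordinatewise inequality
\begin{equation*}
 \varphi_{i}(z_{i}+d_{i})+\varphi_{i}(z_{i}+d'_{i})\ \geq\ \varphi_{i}(z_{i})+\varphi_{i}(z_{i}+d_{i}+d'_{i}) \qquad (i=1,\ldots,n),
\end{equation*}
because a termwise inequality in $\Rinf$ is preserved under summation (if a right-hand term equals $+\infty$, so does the matching left-hand term, since one of $d_i,d'_i$ then supplies that same argument).

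The crucial point is a structural property of the generator set $\calF$ in (\ref{multimodirection1}): each coordinate index $i$ occurs with coefficient $+1$ in exactly one vector of $\calF$, and with coefficient $-1$ in exactly one vector of $\calF$. Hence, for distinct $d,d'$ one never has $d_{i}=d'_{i}=+1$ nor $d_{i}=d'_{i}=-1$, so the (multi)set $\{d_{i},d'_{i}\}$ must be one of $\{0,0\}$, $\{0,+1\}$, $\{0,-1\}$, $\{+1,-1\}$. In the first three cases the coordinatewise inequality holds with equality; in the remaining case it reduces to $\varphi_{i}(z_{i}-1)+\varphi_{i}(z_{i}+1)\geq 2\varphi_{i}(z_{i})$, which is exactly the assumed discrete convexity of $\varphi_{i}$ at $t=z_{i}$. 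Summing over $i$ then yields (\ref{multimodulardef1}).

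I expect the only delicate step to be this combinatorial observation about $\calF=\{-\unitvec{1},\unitvec{1}-\unitvec{2},\ldots,\unitvec{n-1}-\unitvec{n},\unitvec{n}\}$ — that no two distinct generators carry a $+1$, or carry a $-1$, in a common coordinate — since it is precisely what makes the coordinatewise reduction collapse to a single one-dimensional convexity inequality rather than a genuinely two-dimensional one; it has to be checked carefully against the explicit list. As an alternative route, one could instead invoke Theorem \ref{THmmfnlnatfn}: the associated function $g(p)=f(Dp)=\varphi_{1}(p_{1})+\sum_{i=2}\sp{n}\varphi_{i}(p_{i}-p_{i-1})$ is a sum of \Lnat-convex functions (a univariate convex function of a single coordinate, plus univariate convex functions of coordinate differences), hence \Lnat-convex, so $f$ is multimodular. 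I would nevertheless present the direct argument, as it relies only on the definitions recalled above.
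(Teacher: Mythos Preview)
Your direct verification is correct: the key combinatorial observation about $\calF$ holds exactly as you state (each coordinate index appears with coefficient $+1$ in precisely one element of $\calF$ and with coefficient $-1$ in precisely one other), so for distinct $d,d'$ the pair $\{d_i,d'_i\}$ is indeed one of $\{0,0\}$, $\{0,\pm 1\}$, $\{+1,-1\}$, and the coordinatewise inequality collapses to either an identity or the univariate convexity of $\varphi_i$. Your handling of $+\infty$ values is also sound.

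The paper, however, does not argue directly from the definition; it takes precisely the alternative route you sketch at the end. It writes $g(p)=f(Dp)=\varphi_{1}(p_{1})+\sum_{i=2}^{n}\varphi_{i}(p_{i}-p_{i-1})$ and then simply cites the known fact that a sum of univariate convex functions of single coordinates and of coordinate differences is \Lnat-convex, whence $f$ is multimodular by Theorem~\ref{THmmfnlnatfn}. This is in keeping with the paper's declared strategy of routing every argument through \Lnat-convexity. Your direct proof is more self-contained (it needs only the definitions of multimodularity and discrete convexity, not the \Lnat-convex machinery), whereas the paper's proof is shorter but relies on an external reference and on Theorem~\ref{THmmfnlnatfn}.
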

\begin{proof}
For $f(x) = \sum_{i=1}\sp{n} \varphi_{i}(x_{i})$
the function $g$ in (\ref{mmfnGbyF}) is given as 
$g(p) = \varphi_{1}(p_{1}) + \sum_{i=2}\sp{n} \varphi_{i}(p_{i} - p_{i-1})$.
It is known \cite{Mdcasiam,Mbonn09} that such function is \Lnat-convex.
\end{proof}

A quadratic function admits a simple characterization of multimodularity
in terms of its coefficient matrix.

\begin{proposition} \rm \label{PRmmfnquadr}
A quadratic function $f(x) = x^{\top} A x$ is multimodular
if and only if
\begin{equation} \label{mmfquadrcond}
a_{ij} - a_{i,j+1} - a_{i+1,j} + a_{i+1,j+1} \leq 0
\qquad (0 \leq i < j \leq n),
\end{equation}
where $A=(a_{ij} \mid i,j =1,2,\ldots, n)$ and $a_{ij} =0$ if $i=0$ or $j=n+1$.
\end{proposition}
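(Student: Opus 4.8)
The plan is to exploit the fact that a quadratic function has constant mixed second differences, so that multimodularity collapses to finitely many linear inequalities in the entries of $A$. First, since $f$ depends on $A$ only through its symmetric part, I may and do assume $A = A^{\top}$. A direct expansion then shows that, for every $z \in \ZZ\sp{n}$ and all $d, d' \in \ZZ\sp{n}$,
\[
 f(z+d) + f(z+d') - f(z) - f(z+d+d') = - 2\, d^{\top} A\, d' ,
\]
the right-hand side being independent of $z$. Hence, by the definition (\ref{multimodulardef1}), $f$ is multimodular if and only if $d^{\top} A\, d' \le 0$ for every pair of distinct $d, d' \in \calF$.

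Next I would re-index $\calF$: with the convention $\unitvec{0} = \unitvec{n+1} = \veczero$ one has
\[
 \calF = \{\, \unitvec{i} - \unitvec{i+1} \mid i = 0, 1, \ldots, n \,\} ,
\]
since $\unitvec{0} - \unitvec{1} = - \unitvec{1}$ and $\unitvec{n} - \unitvec{n+1} = \unitvec{n}$. For $0 \le i < j \le n$ a one-line computation gives
\[
 (\unitvec{i} - \unitvec{i+1})^{\top} A\, (\unitvec{j} - \unitvec{j+1})
 = a_{ij} - a_{i,j+1} - a_{i+1,j} + a_{i+1,j+1} .
\]
When $i < j$ we have $i+1 \le n$ and $j \ge 1$, so the only out-of-range entries appearing on the right-hand side are those of the form $a_{0,\cdot}$ (when $i = 0$) and $a_{\cdot,n+1}$ (when $j = n$); these are exactly the entries that the statement sets to $0$, and under that convention the right-hand side is the left-hand side of (\ref{mmfquadrcond}).

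Finally I would observe that, $A$ being symmetric, $d^{\top} A\, d' = (d')^{\top} A\, d$, so the condition that $d^{\top} A\, d' \le 0$ for all distinct $d, d' \in \calF$ is equivalent to the condition that $(\unitvec{i} - \unitvec{i+1})^{\top} A\, (\unitvec{j} - \unitvec{j+1}) \le 0$ for all $0 \le i < j \le n$, that is, to (\ref{mmfquadrcond}); this completes the proof. The argument is short and I do not foresee a real obstacle; the only place needing a little care is the index bookkeeping in the middle paragraph --- checking that precisely the boundary entries killed by the stated convention, and no others, occur --- together with the use of symmetry of $A$ to restrict attention to pairs $i < j$. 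Alternatively, the proposition can be obtained from Theorem \ref{THmmfnlnatfn} and the characterization of \Lnat-convex quadratic functions (nonpositive off-diagonal entries and nonnegative row sums of the coefficient matrix) applied to $g(p) = f(Dp) = p^{\top}(D^{\top}AD)p$, but the direct route above seems the most transparent.
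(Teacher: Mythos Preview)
Your argument is correct and is essentially the paper's own proof, carried out in more detail: the paper also writes the elements of $\calF$ as $\unitvec{i}-\unitvec{i+1}$ with $\unitvec{0}=\unitvec{n+1}=\veczero$, observes that (\ref{multimodulardef1}) for a quadratic $f$ reduces to $(\unitvec{i}-\unitvec{i+1})^{\top}A(\unitvec{j}-\unitvec{j+1})\le 0$, and identifies this with (\ref{mmfquadrcond}). Your alternative route via $g(p)=p^{\top}(D^{\top}AD)p$ and the \Lnat-convex quadratic criterion is exactly what the paper records separately as Remark~\ref{RMmmfnquadLnat}.
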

\begin{proof}
The inequality (\ref{multimodulardef1})
for  $d = \unitvec{i} - \unitvec{i+1}$ and $d' = \unitvec{j} - \unitvec{j+1}$,
where $\unitvec{0} = \unitvec{n+1} = \veczero$ by convention,
is equivalent to
$(\unitvec{i} - \unitvec{i+1})\sp{\top} A (\unitvec{j} - \unitvec{j+1}) \leq 0$.
This is further equivalent to (\ref{mmfquadrcond}).
\end{proof}

\begin{remark} \rm \label{RMmmfnquadLnat}
Here is an alternative proof of Proposition \ref{PRmmfnquadr} via \Lnat-convexity. 
Let $\mathcal{L}$ denote the set of all $n \times n$ symmetric matrices $B=(b_{ij})$
such that $b_{ij} \leq 0$ for all $i\not= j$
and $b_{ii} \geq \sum_{j\not= i} |b_{ij}|$ for all $i$.
It is known \cite{Mdcasiam} that 
$g(p) = p^{\top} B p$ is \Lnat-convex if and only if
$B$ belongs to $\mathcal{L}$.
Then, by Theorem \ref{THmmfnlnatfn},
$f(x) = x^{\top} A x$
is multimodular if and only if
$D\sp{\top} A D$ belongs to $\mathcal{L}$.
This latter condition is equivalent to (\ref{mmfquadrcond}).
\finbox
\end{remark}

The following nice properties of multimodular functions are worth mentioning,
though we do not use them in this paper.

\begin{itemize}
\item
An integer point  $x \in \dom f$ is a (global) minimizer of a multimodular function $f$
if and only if it is a local minimizer in the sense that
$f(x) \leq f(x \pm d)$ for all $d \in \calT$, 
where $\calT$ is the set of vectors of the form
$\unitvec{i_{1}} - \unitvec{i_{2}} + \cdots + (-1)\sp{k-1} \unitvec{i_{k}}$
for some increasing sequence of indices $i_{1} < i_{2} < \cdots < i_{k}$
\cite[Theorem 3.1]{Mmult05}.

\item
A multimodular function $f$ can be extended to a convex function 
in a specific manner \cite[Theorem 2.1]{AGH00}.
Furthermore, a multimodular function is integrally convex \cite[Section 14.6]{Mdcaprimer07};
see \cite{Mdcasiam} for the definition of integrally convex functions.

\item
A discrete separation theorem holds for multimodular functions
\cite[Theorem 4.1]{Mmult05}.
Let $f: \ZZ\sp{n} \to \RR \cup \{ +\infty\}$
and $g: \ZZ\sp{n} \to \RR \cup \{ -\infty\}$
be functions such that $f$ and $-g$ are multimodular,
and assume that $f(x_{0})$ and $g(x_{0})$ are finite
for some $x_{0} \in \ZZ\sp{n}$.
If $f(x) \geq g(x)$  for all $x \in \ZZ\sp{n}$,  there exist 
$\alpha\sp{*} \in \RR$ and $p\sp{*} \in \RR\sp{n}$ such that
$f(x) \geq \alpha\sp{*} + \langle p\sp{*}, x \rangle  \geq g(x)$
for all $x \in \ZZ\sp{n}$, 
where $\langle \cdot, \cdot \rangle$ denotes the standard inner product of vectors.
Moreover, if $f$ and $g$ are integer-valued,
there exist integer-valued 
$\alpha\sp{*} \in \ZZ$ and $p\sp{*} \in \ZZ\sp{n}$.
\end{itemize}


\section{Operations via Change of Variables}
\label{SCchangevar}

In this section we consider multimodularity of functions induced
by changes of variables such as an origin shift,
a sign inversion of variables, a permutation of variables, 
and a scaling of variables.
We consistently adopt the proof strategy to translate 
the operations for multimodular functions to those for \Lnat-convex functions,
so that we can better understand the connection
between multimodularity and \Lnat-convexity.
In the proofs we use notations
$f$ for a given multimodular function, 
$\tilde f$ for the function resulting from the operation, and
\begin{align} 
 g(p) &= f(p_{1}, \  p_{2}-p_{1}, \  p_{3}-p_{2}, \ldots, p_{n}-p_{n-1})  = f(Dp),
\label{mmfnGbyFproof1}
\\
 \tilde g(p) &= \tilde f(p_{1}, \  p_{2}-p_{1}, \  p_{3}-p_{2}, \ldots, p_{n}-p_{n-1}) 
  = \tilde f(Dp),
\label{mmfnGbyFproof2}
\end{align}
which imply
\begin{align} 
 f(x) &= g(x_{1}, \  x_{1}+x_{2}, \  x_{1}+x_{2}+x_{3},  \ldots, x_{1}+ \cdots + x_{n}) 
       = g(D\sp{-1}x),
\label{mmfnFbyGproof1}
\\
 \tilde f(x) &= \tilde g(x_{1}, \  x_{1}+x_{2}, \  x_{1}+x_{2}+x_{3}, \ldots, x_{1}+ \cdots + x_{n}) 
  = \tilde g(D\sp{-1}x).
\label{mmfnFbyGproof2}
\end{align}

We start with an origin shift and a sign inversion of variables.

\begin{proposition} \rm \label{PRmmfnorgshift}
For a multimodular function $f$ and an integer vector $b$,
the function $\tilde f(x) = f(x+b)$ is multimodular.
\end{proposition}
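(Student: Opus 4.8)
The plan is to follow the paper's general strategy and transfer the operation to the associated \Lnat-convex function via Theorem~\ref{THmmfnlnatfn}. Let $g$ and $\tilde g$ be defined from $f$ and $\tilde f$ as in (\ref{mmfnGbyFproof1})--(\ref{mmfnGbyFproof2}). Since $f$ is multimodular, $g$ is \Lnat-convex by Theorem~\ref{THmmfnlnatfn}, so it suffices to prove that $\tilde g$ is \Lnat-convex.

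First I would rewrite $\tilde g$ in terms of $g$. By definition $\tilde g(p) = \tilde f(Dp) = f(Dp + b)$. Setting $c = D\sp{-1} b$, we have $b = Dc$, and $c$ is an integer vector because $D\sp{-1}$ is an integer matrix (as recorded after (\ref{matDdef})); hence $\tilde g(p) = f(D(p+c)) = g(p+c)$. Thus $\tilde g$ is obtained from the \Lnat-convex function $g$ by an origin shift along an integer vector. It then remains to observe that an origin shift preserves \Lnat-convexity: the discrete midpoint inequality (\ref{midptcnv}) is invariant under the substitution $p \mapsto p+c$, $q \mapsto q+c$ with $c \in \ZZ\sp{n}$, since $\lceil z + c \rceil = \lceil z \rceil + c$ and $\lfloor z + c \rfloor = \lfloor z \rfloor + c$ for integer $c$. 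Consequently $\tilde g$ is \Lnat-convex, and therefore $\tilde f$ is multimodular by Theorem~\ref{THmmfnlnatfn}.

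There is essentially no serious obstacle here; the only point that requires attention is that an origin shift by $b$ in the $x$-coordinates corresponds to an origin shift by $c = D\sp{-1}b$ in the $p$-coordinates, and this vector must be integral for the reduction to remain within the class of functions on $\ZZ\sp{n}$ — which is exactly the integrality of $D\sp{-1}$. (One could of course also argue directly from the defining inequality (\ref{multimodulardef1}), which is manifestly translation-invariant, but the above route is the one consistent with the paper's uniform proof scheme.)
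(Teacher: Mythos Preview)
Your proof is correct and follows essentially the same route as the paper: you translate the origin shift $\tilde f(x)=f(x+b)$ into $\tilde g(p)=g(p+c)$ with $c=D^{-1}b$ (which the paper writes explicitly as $c=(b_{1},\,b_{1}+b_{2},\,\ldots,\,b_{1}+\cdots+b_{n})$) and then invoke the stability of \Lnat-convexity under integer origin shifts. Your extra remarks on the integrality of $c$ and on the translation-invariance of the discrete midpoint inequality~(\ref{midptcnv}) simply make explicit what the paper takes for granted.
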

\begin{proof}
By (\ref{mmfnFbyGproof1}) and (\ref{mmfnFbyGproof2}),
we can translate $\tilde f(x) = f(x+b)$ to $\tilde g(p) = g(p+c)$   with 
$c = (b_{1},  b_{1}+b_{2}, b_{1}+b_{2}+b_{3},  \ldots, b_{1}+ \cdots + b_{n})$,
where $g$ is \Lnat-convex.
Then $\tilde g$ is also \Lnat-convex,
since \Lnat-convexity is stable under an origin shift.
\end{proof}

\begin{proposition} \rm \label{PRmmfnvarminum}
For a multimodular function $f$,
the function $\tilde f(x) = f(-x)$ is multimodular.
\end{proposition}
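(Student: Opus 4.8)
The plan is to reuse the translation strategy from the proof of Proposition~\ref{PRmmfnorgshift}: rewrite the operation $\tilde f(x)=f(-x)$ as an operation on the associated \Lnat-convex function $g$, and then invoke a stability property of \Lnat-convexity. Concretely, since $D$ is unimodular the map $x\mapsto D\sp{-1}x$ is a bijection on $\ZZ\sp{n}$, and combining (\ref{mmfnGbyFproof2}) and (\ref{mmfnFbyGproof1}) with the definition of $\tilde f$ gives, for every $p\in\ZZ\sp{n}$,
\[
 \tilde g(p) = \tilde f(Dp) = f(-Dp) = g(D\sp{-1}(-Dp)) = g(-p) ,
\]
because $D\sp{-1}(-Dp)=-p$. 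Thus sign inversion of the variables of $f$ corresponds exactly to sign inversion of the variables of $g$.

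It then remains to observe that \Lnat-convexity is preserved under sign inversion of all variables, i.e., if $g$ is \Lnat-convex then so is $p\mapsto g(-p)$. I would verify this directly from discrete midpoint convexity (\ref{midptcnv}): applying that inequality for $g$ to the points $-p$ and $-q$ and using the componentwise identities $\lceil -z\rceil=-\lfloor z\rfloor$ and $\lfloor -z\rfloor=-\lceil z\rceil$ turns it into the discrete midpoint convexity inequality for $g(-\cdot)$ at $p$ and $q$. (Equivalently, via (\ref{lfnlnatfnrelation}): \Lnat-convexity of $g$ means submodularity of its lift on $\ZZ\sp{n+1}$; submodularity is stable under negating all variables since $(-x)\vee(-y)=-(x\wedge y)$ and $(-x)\wedge(-y)=-(x\vee y)$; and the lift of $g(-\cdot)$ is the lift of $g$ with all variables negated.) Hence $\tilde g$ is \Lnat-convex, and therefore $\tilde f$ is multimodular by Theorem~\ref{THmmfnlnatfn}; note also $\dom\tilde f=-\dom f\neq\emptyset$, so $\tilde f$ is legitimately in the class.

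There is essentially no obstacle: the only substantive point is the elementary stability of \Lnat-convexity (equivalently, of submodularity on $\ZZ\sp{n+1}$) under sign inversion, and the bookkeeping with $D$ and $D\sp{-1}$ collapses immediately. If preferred, the whole proof can be compressed to a one-line deduction from Theorem~\ref{THmmfnlnatfn} together with that stability property.
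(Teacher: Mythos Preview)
Your proof is correct and follows essentially the same approach as the paper: translate $\tilde f(x)=f(-x)$ via $D$ to $\tilde g(p)=g(-p)$ and invoke stability of \Lnat-convexity under sign inversion of all variables. The paper's proof is simply the compressed one-line version you mention at the end, while you additionally supply the explicit $D$-computation and a self-contained verification (via the $\lceil -z\rceil=-\lfloor z\rfloor$ identity) of that stability property, which the paper merely asserts.
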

\begin{proof}
By (\ref{mmfnFbyGproof1}) and (\ref{mmfnFbyGproof2}),
we can translate
$\tilde f(x) = f(-x)$ to
$\tilde g(p) = g(-p)$, where $g$ is \Lnat-convex.
Then $\tilde g$ is also \Lnat-convex,
since \Lnat-convexity is stable under a sign inversion of variables.
\end{proof}

It is known that
reversing the ordering of variables preserves multimodularity
\cite[Remarks (1)]{Haj85}.
It is emphasized that this is not obvious 
since the definition of multimodularity
depends on the ordering of variables.

\begin{proposition} [\cite{Haj85}] \rm \label{PRmmfnvarinv}
For a multimodular function $f$,
the function $\tilde f$ defined by
$\tilde f(x_{1}, x_{2}, \ldots, x_{n}) \allowbreak = f(x_{n}, \ldots, x_{2}, x_{1})$
is multimodular.
\end{proposition}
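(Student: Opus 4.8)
The plan is to follow the strategy of the preceding proofs: translate the operation $\tilde f(x) = f(x_{n}, \ldots, x_{2}, x_{1})$ into an operation on the \Lnat-convex function $g$ of (\ref{mmfnGbyFproof1}), and then handle the latter. First I would compute $\tilde g$ of (\ref{mmfnGbyFproof2}) explicitly. Writing $\tilde g(p) = \tilde f(Dp)$ and applying the representation $f(x) = g(x_{1}, x_{1}+x_{2}, \ldots, x_{1}+\cdots+x_{n})$ of (\ref{mmfnFbyGproof1}) to the vector $((Dp)_{n}, (Dp)_{n-1}, \ldots, (Dp)_{1})$, the $k$th partial sum $\sum_{j=n-k+1}^{n}(Dp)_{j}$ telescopes --- recall $(Dp)_{j} = p_{j}-p_{j-1}$ with the convention $p_{0}=0$ --- to $p_{n}-p_{n-k}$, so that
\[
 \tilde g(p) = g(p_{n}-p_{n-1},\ p_{n}-p_{n-2},\ \ldots,\ p_{n}-p_{1},\ p_{n}) .
\]
This is a unimodular change of variables that is not visibly one of the operations under which \Lnat-convexity is known to be stable, so one further idea is needed.

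The key step is to pass to the associated {\rm L}-convex functions on $\ZZ^{n+1}$. By (\ref{lfnlnatfnrelation}), $g$ is \Lnat-convex if and only if the function $h$ on $\ZZ^{n+1}$ defined by $h(q_{0}, q_{1}, \ldots, q_{n}) = g(q_{1}-q_{0}, q_{2}-q_{0}, \ldots, q_{n}-q_{0})$ is submodular, and similarly $\tilde g$ is \Lnat-convex if and only if $\tilde h(q_{0}, q_{1}, \ldots, q_{n}) = \tilde g(q_{1}-q_{0}, \ldots, q_{n}-q_{0})$ is submodular. I would then verify, by substituting $p_{i} = q_{i}-q_{0}$ into the displayed formula for $\tilde g$, the identity
\[
 \tilde h(q_{0}, q_{1}, \ldots, q_{n}) = h(-q_{n},\ -q_{n-1},\ \ldots,\ -q_{1},\ -q_{0}) ,
\]
i.e., $\tilde h$ is obtained from $h$ by reversing the order of all $n+1$ variables and inverting all their signs.

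It then remains only to observe that reversing the order of variables is a permutation of the $n+1$ coordinates, and that sign inversion sends $u \vee v$ and $u \wedge v$ to $(-u) \wedge (-v)$ and $(-u) \vee (-v)$; hence submodularity on $\ZZ^{n+1}$ is preserved under each. Therefore $\tilde h$ is submodular, $\tilde g$ is \Lnat-convex, and $\tilde f$ is multimodular by Theorem~\ref{THmmfnlnatfn}.

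The only real obstacle is conceptual rather than computational: reversal is opaque at the level of $f$, and even of $g$, and the point is to recognize that after the lift to $\ZZ^{n+1}$ it becomes the transparent combination ``reverse the coordinates and negate''. As a byproduct this indicates a shortcut bypassing $g$ altogether: the linear involution $x \mapsto (-x_{n}, -x_{n-1}, \ldots, -x_{1})$ maps $\calF$ of (\ref{multimodirection1}) bijectively onto itself --- it interchanges $-\unitvec{1}$ and $\unitvec{n}$ and reverses the list $\unitvec{1}-\unitvec{2}, \ldots, \unitvec{n-1}-\unitvec{n}$ --- so $x \mapsto f(-x_{n}, \ldots, -x_{1})$ is multimodular directly from definition (\ref{multimodulardef1}), and composing with Proposition~\ref{PRmmfnvarminum} gives $\tilde f$.
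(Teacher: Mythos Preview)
Your main argument is correct and is essentially the paper's own proof, repackaged: both compute $\tilde g(p)=g(p_{n}-p_{n-1},\ldots,p_{n}-p_{1},p_{n})$, both lift to $\ZZ^{n+1}$, and both reduce the claim to the invariance of submodularity (equivalently, of \Lnat-convexity) under coordinate permutation and sign inversion. The only cosmetic difference is that the paper names the intermediate \Lnat-convex function $g^{\rm rev}(p_{0},\ldots,p_{n-1})=g(-p_{n-1},\ldots,-p_{0})$ and identifies your $\tilde h$ as its lift via (\ref{lfnlnatfnrelation}), whereas you lift $g$ and $\tilde g$ separately and relate the two lifts directly; the content is identical.

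Your closing shortcut, however, is a genuinely simpler alternative that the paper does not give. The observation that the linear involution $x\mapsto(-x_{n},\ldots,-x_{1})$ permutes $\calF$ lets you verify (\ref{multimodulardef1}) for $f(-x_{n},\ldots,-x_{1})$ in one line straight from the definition, and then Proposition~\ref{PRmmfnvarminum} finishes. This bypasses Theorem~\ref{THmmfnlnatfn} and the whole detour through \Lnat-convexity. What the paper's route buys is the explicit formula (\ref{mmfnreversal2}) showing how order reversal looks on the \Lnat-convex side, which is of independent interest; what your shortcut buys is a self-contained three-line proof.
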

\begin{proof}
We give an alternative proof  via \Lnat-convexity in accordance with our strategy.
Let $R=(r_{ij})$ denote the permutation matrix 
representing the reversal of the ordering, i.e.,
$r_{i,n+1-i}=1$ for $i=1,2,\ldots,n$ and other entries being zero.
Then we have $\tilde f(x) = f(R x)$.
By (\ref{mmfnFbyGproof1}) and (\ref{mmfnFbyGproof2}),
we can translate
$\tilde f(x) = f(R x)$ to
$\tilde g(D\sp{-1}x)  = g(D\sp{-1} R x)$,
that is, $\tilde g(p)  = g(D\sp{-1} R D p)$.
A direct calculation shows that the matrix $T=(t_{ij})=D\sp{-1} R D$ 
is given by:
$t_{in}=1$ ($i=1,2,\ldots,n$),
$t_{i,n-i}=-1$ ($i=1,2,\ldots,n-1$),
and 
$t_{ij} =0$ for other $(i,j)$.
For $n=4$, for example, we have
$T=D\sp{-1} R D =
{\footnotesize
\left[ \begin{array}{rrrr}
0 & 0 & -1 & 1 \\
0 & -1 & 0 & 1 \\
-1 & 0 & 0 & 1 \\
0 & 0 & 0 & 1 \\
\end{array}\right]}$.
Then we obtain%
\footnote{
It is somewhat surprising that the order reversal of variables
corresponds to the transformation (\ref{mmfnreversal2}) for \Lnat-convex functions.
} 
\begin{equation} \label{mmfnreversal2} 
\tilde g(p)  = g( -(p_{n-1},p_{n-2}, \ldots, p_{1}, 0) + p_{n}\vecone ) .
\end{equation}
The \Lnat-convexity of $\tilde g$ can be seen as follows.
Define $h: \ZZ\sp{n+1} \to \Rinf$ by
\[
h(p_{0}, p_{1}, p_{2}, \ldots, p_{n} ) 
 = g( -(p_{n-1},p_{n-2}, \ldots, p_{1}, p_{0}) + p_{n}\vecone ) 
\]
and
 $g\sp{\rm rev}: \ZZ\sp{n} \to \Rinf$ by
\[
g\sp{\rm rev}(p_{0},p_{1}, \ldots, p_{n-2}, p_{n-1})
 = g(-p_{n-1},-p_{n-2}, \ldots, -p_{1}, -p_{0}).
\]
The function $h$ is {\rm L}-convex, 
since $g\sp{\rm rev}$ is \Lnat-convex 
and the function derived from $g\sp{\rm rev}$
by (\ref{lfnlnatfnrelation}) coincides with $h$.
Then the relation
$\tilde g(p)  = h(0, p_{1}, p_{2}, \ldots, p_{n} )$
in (\ref{mmfnreversal2}) 
means that $\tilde g$ is obtained from an {\rm L}-convex function by restriction.
Therefore, $\tilde g$ is \Lnat-convex.
\end{proof}

Not every permutation of variables preserves multimodularity.

\begin{example} \rm \label{EXmmperm1}
The quadratic function
$f(x) = x^{\top} A x$
with
$A  = {\small\footnotesize
\left[ \begin{array}{rrr}
 1 & 1 & 0 \\
 1 & 2 & 1 \\
 0 & 1 & 1
\end{array}\right]}$
is multimodular,
whereas 
$\tilde f(x_{1}, x_{2}, x_{3}) = f(x_{2}, x_{1}, x_{3})$
arising from a transposition is not multimodular.
Indeed we have
$\tilde f(x) = x^{\top} \tilde A x$
for
$\tilde A  = {\small\footnotesize
\left[ \begin{array}{rrr}
 2 & 1 & 1 \\
 1 & 1 & 0 \\
 1 & 0 & 1  \\
\end{array}\right]}$,
for which the condition (\ref{mmfquadrcond}) fails for $(i,j)=(1,3)$.
Referring to Remark \ref{RMmmfnquadLnat} we also note that
$B = D\sp{\top} A D =
{\small\footnotesize
\left[ \begin{array}{rrr}
 1 & 0  & -1  \\
 0 & 1 & 0 \\
 -1 & 0 & 1
\end{array}\right]} \in \mathcal{L}$
\ and \ 
$\tilde B = D\sp{\top} \tilde A D =
{\small\footnotesize
\left[ \begin{array}{rrr}
 1 & -1  & \framebox{1}  \\
 -1 & 2 & -1 \\
 \framebox{1} & -1 & 1
\end{array}\right]}   \not\in \mathcal{L}$.
A cyclic permutation of variables 
$f(x_{3}, x_{1}, x_{2})$
is not multimodular, either,
since it coincides with $x^{\top} \tilde A x$. 
\finbox
\end{example}

A scaling of variables preserves multimodularity.

\begin{proposition} \rm \label{PRmmfnvarscaling}
For a multimodular function $f$
and a positive integer $s$,
the function 
$\tilde f(x) = f(s x)$
is multimodular.
\end{proposition}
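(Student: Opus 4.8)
The plan is to reuse the translation strategy of the preceding propositions. By (\ref{mmfnFbyGproof1}) and (\ref{mmfnFbyGproof2}), the operation $\tilde f(x) = f(sx)$ translates to $\tilde g(D\sp{-1}x) = g(D\sp{-1}(sx)) = g(s D\sp{-1}x)$, i.e., to $\tilde g(p) = g(sp)$, where $g$ is the \Lnat-convex function associated with $f$ by Theorem \ref{THmmfnlnatfn}. Hence, once we know that $\tilde g(p) = g(sp)$ is \Lnat-convex, Theorem \ref{THmmfnlnatfn} immediately yields that $\tilde f$ is multimodular. So the whole proposition reduces to the statement that \Lnat-convexity is preserved under scaling of variables by a positive integer.

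That reduction is the essential content, and I would either cite it directly (it is known, e.g.\ from \cite{Mdcasiam}, that $g(p) \mapsto g(sp)$ preserves \Lnat-convexity) or give a short self-contained argument using the translation-submodularity characterization of \Lnat-convexity, namely that $g$ is \Lnat-convex if and only if $g(p) + g(q) \geq g((p - \alpha \vecone) \vee q) + g(p \wedge (q + \alpha \vecone))$ holds for all $p, q \in \ZZ\sp{n}$ and all nonnegative integers $\alpha$. Applying this inequality to the pair $sp, sq$ with parameter $s\alpha$, and using that $\vee$ and $\wedge$ commute with multiplication by the positive integer $s$, the right-hand side becomes $g(s((p-\alpha\vecone)\vee q)) + g(s(p \wedge (q+\alpha\vecone))) = \tilde g((p-\alpha\vecone)\vee q) + \tilde g(p\wedge(q+\alpha\vecone))$, which is exactly translation submodularity for $\tilde g$; hence $\tilde g$ is \Lnat-convex.

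I do not expect a genuine obstacle here: the matrix bookkeeping is trivial since $s$ is a scalar ($D\sp{-1}(sx) = s D\sp{-1}x$), and on the \Lnat-side scaling stability is standard. The one point deserving a remark is the effective domain: $\dom \tilde f = \{ x \in \ZZ\sp{n} \mid sx \in \dom f \}$ can be empty even when $\dom f \neq \emptyset$, so strictly speaking the statement should carry the proviso $\dom \tilde f \neq \emptyset$ (equivalently $\dom \tilde g \neq \emptyset$), under which everything above goes through verbatim. It is worth contrasting this with scaling of \Mnat-convex functions, which preserves that class only for $n \le 2$ (cf.\ Remark \ref{RMmmfnn2mnat}); the \Lnat/multimodular side behaves uniformly in $n$, consistently with Theorem \ref{THmmfnlnatfn}.
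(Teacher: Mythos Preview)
Your proof is correct and follows exactly the paper's approach: translate $\tilde f(x)=f(sx)$ via (\ref{mmfnFbyGproof1})--(\ref{mmfnFbyGproof2}) to $\tilde g(p)=g(sp)$ and invoke the known stability of \Lnat-convexity under scaling \cite{Mdcasiam}. Your additional self-contained verification via translation submodularity and the remark on the proviso $\dom\tilde f\neq\emptyset$ are useful elaborations, but the core argument is the same as the paper's.
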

\begin{proof}
By (\ref{mmfnFbyGproof1}) and (\ref{mmfnFbyGproof2}),
we can translate
$\tilde f(x) = f(s x)$ to
$\tilde g(p) = g(s p)$, where $g$ is \Lnat-convex.
Then $\tilde g$ is also \Lnat-convex,
since \Lnat-convexity is stable under a scaling of variables \cite{Mdcasiam}.
\end{proof}

\section{Operations Relating to Function Values}
\label{SCfnvaloper}

In this section we consider multimodularity of functions 
resulting from operations such as 
nonnegative multiplication of function values,
addition of a linear function,
projection (partial minimization),
sum of two functions, and
convolution of two functions.
We continue with the proof strategy of translating 
the operations for multimodular functions to those for \Lnat-convex functions.

\subsection{Multiplication and Addition}
\label{SCsumetc}

We start with simple operations, for which the following statements are obvious.

\begin{proposition}[\cite{AGH00}] \rm \label{PRmmfnsumetc}
Let $f$, $f_{1}$, $f_{2}$  be multimodular functions.

\noindent
{\rm (1)}
For any $a \geq 0$,
$\tilde f(x) = a f(x)$ 
is multimodular.

\noindent
{\rm (2)}
For any $c \in \RR\sp{n}$,
$\tilde f(x) = f(x) + \sum_{i=1}\sp{n} c_{i} x_{i}$
is multimodular.

\noindent
{\rm (3)}
For any separable convex function $\varphi(x)$,
$\tilde f(x) = f(x) + \varphi(x)$
is multimodular.

\noindent
{\rm (4)}
Sum $\tilde f(x) =  f_{1}(x) + f_{2}(x)$ 
is multimodular.
\finbox
\end{proposition}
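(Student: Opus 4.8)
The plan is to prove all four statements uniformly through the translation scheme
(\ref{mmfnGbyFproof1})--(\ref{mmfnFbyGproof2}): setting $g(p)=f(Dp)$
(respectively $g_{i}(p)=f_{i}(Dp)$), which is \Lnat-convex by Theorem~\ref{THmmfnlnatfn},
it suffices in each case to verify that the operation carries the \Lnat-convex function $g$
to another \Lnat-convex function $\tilde g$, and then to read the conclusion back via
$f=g(D\sp{-1}x)$, again by Theorem~\ref{THmmfnlnatfn}.

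For (1), the identity $\tilde f(x)=af(x)$ transforms into $\tilde g(p)=ag(p)$, and \Lnat-convexity
is stable under multiplication by a nonnegative constant. For (2), since
$\langle c,Dp\rangle=\langle D\sp{\top}c,p\rangle$, the function
$\tilde f(x)=f(x)+\sum_{i=1}\sp{n}c_{i}x_{i}$ transforms into
$\tilde g(p)=g(p)+\langle D\sp{\top}c,p\rangle$, i.e.\ $g$ with a linear function added, which
is again \Lnat-convex. Both closure properties are standard \cite{Mdcasiam}; the only point to
note is that the unimodular change of variables $D$ sends a linear function of $x$ to a linear
function of $p$.

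For (4) we have $\tilde g(p)=g_{1}(p)+g_{2}(p)$, the sum of two \Lnat-convex functions, which
is \Lnat-convex \cite{Mdcasiam} (here one uses the tacit assumption
$\dom f_{1}\cap\dom f_{2}\neq\emptyset$, equivalently $\dom g_{1}\cap\dom g_{2}\neq\emptyset$,
so that $\tilde f$ has nonempty effective domain). Statement (3) is then the special case
$f_{2}=\varphi$ together with Proposition~\ref{PRsepmmfn}; alternatively, by the computation in
the proof of that proposition, $\varphi(Dp)=\varphi_{1}(p_{1})+\sum_{i=2}\sp{n}\varphi_{i}(p_{i}-p_{i-1})$
is \Lnat-convex, so $\tilde g(p)=g(p)+\varphi(Dp)$ is once more a sum of \Lnat-convex functions.
(Likewise, (2) is itself the special case of (3) in which each $\varphi_{i}$ is linear.)

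I do not expect a genuine obstacle: the substance is entirely carried by the known closure
properties of \Lnat-convex functions. The only items needing an explicit line are the remark in
(2) that a linear term pulls back to a linear term under $D$, and the standing nonemptiness
assumption on intersecting effective domains required in (4) and hence in (3); everything else is
an immediate transfer of the corresponding \Lnat-convex fact.
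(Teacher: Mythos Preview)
Your argument is correct. Note, however, that the paper gives no proof at all for this proposition: it simply states that ``the following statements are obvious'' and closes with a $\rule{0.2cm}{0.2cm}$. The most immediate justification is directly from the defining inequality~(\ref{multimodulardef1}): that inequality is preserved under multiplication by $a\ge 0$, is satisfied with equality by any linear function, and is preserved under addition of two functions satisfying it; this dispatches (1), (2), (4) in one line each, and (3) follows from (4) and Proposition~\ref{PRsepmmfn}.

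Your translation-to-\Lnat\ route is a legitimate alternative and is consistent with the paper's announced strategy for Section~\ref{SCfnvaloper}, but it is more machinery than the situation requires: you invoke Theorem~\ref{THmmfnlnatfn} twice plus the corresponding closure facts for \Lnat-convex functions, whereas the direct argument needs nothing beyond (\ref{multimodulardef1}). What your approach buys is uniformity with the surrounding proofs (Propositions~\ref{PRmmfnorgshift}--\ref{PRmmfnvarscaling} and~\ref{PRmmfnrestr}--\ref{PRmmfnprojinterval}); what the direct argument buys is that it is self-contained and shorter. Your remarks about the linear term pulling back to a linear term under $D$, and about the nonemptiness of $\dom f_{1}\cap\dom f_{2}$, are the only places any care is needed, and both are handled correctly.
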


\subsection{Restriction}
\label{SCrestr}

Let $N = \{ 1,2,\ldots, n \}$.
For a function
$f: \ZZ\sp{N} \to \Rinf$ and a subset $U \subseteq N$, the
{\em restriction}
of $f$ to $U$ is a function $f_{U}: \ZZ\sp{U} \to \Rinf$ defined by%
\footnote{
For any $z\in \ZZ\sp{N \setminus U}$ we may consider a function $f(y,z)$ in $y \in \ZZ\sp{U}$.
For simplicity we choose $z=\veczero_{N \setminus U}$.
} 
\begin{eqnarray} 
 f_{U}(y) &=& f(y,\veczero_{N \setminus U})
  \qquad (y \in \ZZ\sp{U}) ,
\label{fsetrestrict} 
\end{eqnarray}
where $\veczero_{N \setminus U}$ denotes the zero vector 
in $\ZZ\sp{N \setminus U}$.
The notation 
$(y,\veczero_{N \setminus U})$ means the vector
whose $i$th component is equal to $y_{i}$ for $i \in U$
and to 0 for $i \in N \setminus U$;
for example, if $N = \{ 1,2,3 \}$ and $U = \{ 1,3 \}$,
$(y,\veczero_{N \setminus U})$ means $(y_{1}, 0, y_{3})$.

The restriction of a multimodular function is known to be multimodular
\cite[Lemma 2.3]{AGH00} (see also \cite[Lemma 3]{AGH03}).

\begin{proposition}[\cite{AGH00}] \rm \label{PRmmfnrestr}
For a multimodular function $f$ and any subset $U$,
the restriction $f_{U}$
is multimodular, provided that $\dom f_{U} \neq \emptyset$.
\end{proposition}

\begin{proof}
We give an alternative proof in accordance with our strategy.
It suffices to consider the case where
$N \setminus U = \{ k \}$ for some $k \in N$.
Define $\tilde f (x_{1}, \ldots, x_{k-1}, x_{k+1}, \ldots,x_{n})
= f (x_{1}, \ldots, x_{k-1}, 0, x_{k+1},  \allowbreak   \ldots,x_{n})$.
Then $\tilde f$ is multimodular
if and only if 
the inequality (\ref{multimodulardef1}) holds for $f$
for all $z \in \ZZ\sp{n}$ and all distinct elements $d, d'$ of 
\[
\tilde \calF = 
\calF \setminus \{ \unitvec{k-1}-\unitvec{k}, \unitvec{k}-\unitvec{k+1} \}
   \cup \{  \unitvec{k-1}-\unitvec{k+1} \} ,
\]
where $\unitvec{0}=\unitvec{n+1}=\bm{0}$.
We use notation
$\psi(x) = (x_{1}, \  x_{1}+x_{2}, \  x_{1}+x_{2}+x_{3},  \ldots, x_{1}+ \cdots + x_{n})$
for the transformation $x \mapsto p$ 
in (\ref{mmfnFbyG}),
i.e., $f(x)=g(\psi(x))$.
If $k = 1$, we have
\begin{align*}
\psi(-\unitvec{2}) &= (0,-1, \ldots, -1) = \unitvec{1}-\vecone,
\\  
\psi(\unitvec{i}-\unitvec{i+1}) &= \unitvec{i}
 \qquad (i \in \{ 2, \ldots, n \})
\end{align*}
for the elements of $\tilde \calF$, 
and therefore, $\tilde f$ is multimodular if and only if 
\begin{align}
& 
g(p + \unitvec{i}) + g(p + \unitvec{j}) \geq g(p) + g(p + \unitvec{i} + \unitvec{j}  ) ,
\label{gijk1}
\\ &
g(p + \unitvec{i}) + g(p + \unitvec{1} - \vecone ) \geq g(p) + g(p + \unitvec{i} + \unitvec{1} - \vecone   ) ,
\label{gi1k1}
\end{align}
where $i, j \in \{ 2, \ldots, n \}$ and $i \not= j$.
If $2 \leq k \leq n$, we have
\begin{align*}
\psi(-\unitvec{1}) &= (-1,-1, \ldots, -1) = -\vecone,
\\  
\psi(\unitvec{i}-\unitvec{i+1}) &= \unitvec{i}
 \qquad (i \in \{ 1, \ldots, k-2 \} \cup \{ k+1, \ldots, n \}),
\\  
\psi(\unitvec{k-1}-\unitvec{k+1}) &= \unitvec{k-1} + \unitvec{k}
\end{align*}
for the elements of $\tilde \calF$, 
and therefore, $\tilde f$ is multimodular if and only if 
\begin{align}
& 
g(p + \unitvec{i}) + g(p + \unitvec{j}) \geq g(p) + g(p + \unitvec{i} + \unitvec{j}  ) ,
\label{gij}
\\ &
g(p + \unitvec{i}) + g(p + \unitvec{k-1} + \unitvec{k} ) \geq g(p) + g(p + \unitvec{i} + \unitvec{k-1} + \unitvec{k}   ) ,
\label{gikk}
\\ &
g(p + \unitvec{i}) + g(p - \vecone ) \geq g(p) + g(p + \unitvec{i} - \vecone   ) ,
\label{gi1}
\\ &
 g(p+ \unitvec{k-1} + \unitvec{k}) + g(p - \vecone )  \geq g(p) + g(p + \unitvec{k-1} + \unitvec{k} - \vecone  ) 
\label{gikk1},
\end{align}
where $i, j \in \{ 1, \ldots, k-2 \} \cup \{ k+1, \ldots, n \}$ and $i \not= j$.
We finally observe that inequalities 
(\ref{gijk1})--(\ref{gikk1}) hold by the discrete midpoint convexity (\ref{midptcnv}) of $g$.
\end{proof}

\subsection{Projection}
\label{SCproj}

For a function
$f: \ZZ\sp{N} \to \Rinf$ and a subset $U \subseteq N$, the
{\em projection}
of $f$ to $U$ means a function
$f\sp{U}: \ZZ\sp{U} \to \Rmpinf$
defined by
\begin{equation} \label{fsetprojdef} 
  f\sp{U}(y)  =  \inf \{ f(y,z) \mid z \in \ZZ\sp{N \setminus U} \}
  \qquad (y \in \ZZ\sp{U}) ,
\end{equation}
where the notation $(y,z)$ means the vector
whose $i$th component is equal to $y_{i}$ for $i \in U$
and to $z_{i}$ for $i \in N \setminus U$;
for example, if $N = \{ 1,2,3,4 \}$ and $U = \{ 2,3 \}$,
$(y,z) = (z_{1}, y_{2}, y_{3}, z_{4})$.
We assume $f\sp{U} > -\infty$.
The projection is sometimes called {\em partial minimization}.

A subset $U$ of $N = \{ 1,2,\ldots, n \}$
is said to be an {\em interval}
if it consists of consecutive numbers.
The projection of a multimodular function to an interval is multimodular.

\begin{proposition} \rm \label{PRmmfnprojinterval}
For a multimodular function $f$ and an interval $U$,
the projection 
$f\sp{U}$
is multimodular.
\end{proposition}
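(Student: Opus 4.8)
The plan is to use the same translation strategy as in the preceding proofs: with $g(p)=f(Dp)$ as in \eqref{mmfnGbyFproof1}--\eqref{mmfnFbyGproof1}, $g$ is \Lnat-convex by Theorem~\ref{THmmfnlnatfn}, and I will establish multimodularity of $f\sp{U}$ by showing that the corresponding function on the $p$-side is \Lnat-convex. Write $U=\{a,a+1,\ldots,b\}$ with $1\le a\le b\le n$, and assume throughout, as in \eqref{fsetprojdef}, that the partial minimum is nowhere $-\infty$. Since partial minimization over two disjoint blocks of coordinates can be carried out one block at a time, $f\sp{U}=(f\sp{\{1,\ldots,b\}})\sp{\{a,\ldots,b\}}$, so it suffices to prove two facts separately: (i) projecting out a \emph{suffix} $\{b+1,\ldots,n\}$ preserves multimodularity; and (ii) projecting out a \emph{prefix} $\{1,\ldots,a-1\}$ preserves multimodularity.

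Fact (i) is immediate after translation: setting $p_i=x_1+\cdots+x_i$, fixing $x_1,\ldots,x_b$ is the same as fixing $p_1,\ldots,p_b$ and letting $x_{b+1},\ldots,x_n$ vary over $\ZZ\sp{n-b}$ is the same as letting $p_{b+1},\ldots,p_n$ vary over $\ZZ\sp{n-b}$; hence $f\sp{\{1,\ldots,b\}}$ corresponds to the partial minimization of the \Lnat-convex function $g$ over its last $n-b$ variables, which is again \Lnat-convex \cite{Mdcasiam}, so $f\sp{\{1,\ldots,b\}}$ is multimodular by Theorem~\ref{THmmfnlnatfn} (for functions in $b$ variables).

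Fact (ii) is the heart of the matter. Fixing the tail variables $x_a,\ldots,x_n$ to $y$ forces $p_{a-1+j}=p_{a-1}+q_j$ for $j=1,\ldots,m$ (where $m=n-a+1$ and $q_j=y_1+\cdots+y_j$), while $p_1,\ldots,p_{a-1}$ stay free; writing $y=Dq$ with $D$ the size-$m$ bidiagonal matrix, this gives
\[
 f\sp{\{a,\ldots,n\}}(Dq)=\inf\nolimits_{p_1,\ldots,p_{a-1}\in\ZZ}\ g(p_1,\ldots,p_{a-1},\,p_{a-1}+q_1,\ldots,p_{a-1}+q_m).
\]
The plan is to pass to the associated {\rm L}-convex function $\bar g$ on $\ZZ\sp{n+1}$ given by \eqref{lfnlnatfnrelation}, which satisfies $g(p)=\bar g(0,p)$ and $\bar g(w+\vecone)=\bar g(w)+r$ for all $w\in\ZZ\sp{n+1}$ and a constant $r$. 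Subtracting $p_{a-1}\vecone$ from the argument of $\bar g$ absorbs the common shift by $p_{a-1}$ into the all-ones direction, turning the summand into $\bar g$ evaluated with one coordinate frozen at $0$ plus a linear term; after reindexing the minimization variables (the map $(p_1,\ldots,p_{a-1})\mapsto(-p_{a-1},p_1-p_{a-1},\ldots,p_{a-2}-p_{a-1})$ is a bijection of $\ZZ\sp{a-1}$), the minimand becomes an {\rm L}-convex function with one coordinate frozen at $0$ --- hence \Lnat-convex, by the {\rm L}- to \Lnat-convex passage recalled in Section~\ref{SCmmfnprelim}, applied after a harmless coordinate permutation --- with a linear function of the minimization variables added, which is still \Lnat-convex. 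Thus $f\sp{\{a,\ldots,n\}}(Dq)$ is a partial minimization of an \Lnat-convex function of the minimization variables together with $q$, hence \Lnat-convex in $q$ \cite{Mdcasiam}, and $f\sp{\{a,\ldots,n\}}$ is multimodular by Theorem~\ref{THmmfnlnatfn}. Together with (i) this proves the proposition.

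The main obstacle is Fact (ii): the substitution $p_{a-1+j}\mapsto p_{a-1}+q_j$ is an affine change of variables that does \emph{not} preserve \Lnat-convexity for a generic such substitution, and the argument succeeds only because \emph{all} of the tail coordinates are shifted by the \emph{same} free variable $p_{a-1}$, matching the $\vecone$-direction along which {\rm L}-convex functions are affine --- which is also why the interval hypothesis on $U$ cannot be dropped. Some routine care is additionally needed for the degenerate cases $a=1$ (no prefix) and $b=n$ (no suffix), where the corresponding projection step is vacuous.
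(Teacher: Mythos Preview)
Your proof is correct. Both your argument and the paper's split the task into eliminating a suffix block and a prefix block of variables; for the suffix case both observe that this is literally a projection of the \Lnat-convex function $g$, hence \Lnat-convex, hence $f\sp{\{1,\ldots,b\}}$ is multimodular.

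The difference lies in how the prefix case is handled. The paper simply invokes Proposition~\ref{PRmmfnvarinv} (reversal of the ordering of variables preserves multimodularity) to reduce the prefix case to the suffix case already treated. You instead give a direct argument: pass to the {\rm L}-convex extension $\bar g$ on $\ZZ\sp{n+1}$, use its $\vecone$-translation invariance to absorb the common shift $p_{a-1}$, and recognise the result as an \Lnat-convex function of $(u,q)$ (an {\rm L}-convex function with one coordinate frozen at~$0$), whose partial minimum over $u$ is \Lnat-convex in $q$. This is exactly the mechanism that underlies the proof of Proposition~\ref{PRmmfnvarinv} itself (see \eqref{mmfnreversal2} and the lines following it), so you have effectively unpacked that proposition inline rather than citing it. The trade-off: the paper's route is shorter on the page because it reuses a result already established, while yours is self-contained and makes explicit why the interval hypothesis matters (all tail coordinates shift by the \emph{same} free variable, matching the $\vecone$-direction). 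Two minor remarks: since $\bar g(p_0,p)=g(p-p_0\vecone)$ one has $r=0$, so the linear term you carry along actually vanishes; and your appeal to ``a harmless coordinate permutation'' is legitimate because both submodularity and the $\vecone$-shift condition defining {\rm L}-convexity are coordinate-symmetric.
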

\begin{proof}
We first consider the case of $U = N \setminus \{ n \}$.
By (\ref{fsetprojdef}) and (\ref{mmfnFbyG}) we obtain
\begin{align*}
& f\sp{U}(x_{1},x_{2},\ldots,x_{n-1} ) 
\\ &
 = \inf_{z \in \ZZ} f(x_{1},x_{2},\ldots,x_{n-1}, z)
\\ &=
 \inf_{z \in \ZZ} 
  g(x_{1}, x_{1}+x_{2}, \ldots, \  x_{1}+ \cdots + x_{n-1}, \  x_{1}+ \cdots + x_{n-1} + z)  
\\ &
= g\sp{U}(x_{1},  x_{1}+x_{2},  \ldots, x_{1}+ \cdots + x_{n-1})  ,
\end{align*}
where $g\sp{U}$ denotes the projection of $g$ to $U$.
Here $g\sp{U}$ is \Lnat-convex, since
the projection of an \Lnat-convex function is known \cite[Theorem 7.11]{Mdcasiam} to be \Lnat-convex.
Therefore, $f\sp{U}$ is multimodular.

The case of $U = N \setminus \{ 1 \}$
can be reduced to the above case by Proposition \ref{PRmmfnvarinv},
which allows us to reverse the ordering of variables.
For a general interval $U$, we repeat eliminating variables from 
both ends of $\{ 1,2,\ldots, n \}$.
\end{proof}

The projection of a multimodular function 
to an arbitrary subset $U$ is not necessarily multimodular.

\begin{example}   \rm \label{EXmmfnproj4}
The quadratic function
$f(x) = x^{\top} A x$
with
$A  = {\small\footnotesize
\left[ \begin{array}{cccc}
  3 & 2 & 1 & 0 \\
 2 & 3 & 2 &  1 \\
 1 & 2 & 2 & 1 \\
 0 & 1 & 1 &  1 \\
\end{array}\right]
}
$
is multimodular,
whereas its projection
$f\sp{U}$ to $U= \{ 1,2,4 \}$
is not.
Indeed we have
$f\sp{U}(y) = y^{\top} \tilde A y$
for
$\tilde A = {\small\footnotesize
{\displaystyle {\small\footnotesize \frac{1}{2}}  }
\left[ \begin{array}{rrr}
 5 & 2 & -1 \\
 2 & 2 & 0 \\
 -1 & 0 & 1 \\
\end{array}\right]}$,
where
$\tilde A =(\tilde a_{ij} \mid  i,j =1,2,4)$
is obtained from $A$ by the usual sweep-out operation:
 $\tilde a_{ij} = a_{ij} - a_{i3} a_{3j} / a_{33}$
$(i,j \in \{ 1,2,4 \})$.
The matrix $\tilde A$  violates the condition (\ref{mmfquadrcond}) for $(i,j)=(1,2)$.
Referring to Remark \ref{RMmmfnquadLnat} we also note that
$B = D_{4}\sp{\top} A D_{4} =
{\small\footnotesize
\left[ \begin{array}{rrrr}
2 & 0 & 0 &  -1 \\
0 & 1 & 0 & 0   \\
0 & 0  & 1 & 0  \\
-1 &  0 & 0 & 1  \\
\end{array}\right]} \in \mathcal{L}$
\ and \ 
$\tilde B = D_{3}\sp{\top} \tilde A D_{3} =
{\displaystyle { \small \frac{1}{2} }}
{\small\footnotesize
\left[ \begin{array}{rrr}
 3 & \framebox{1} & -1 \\
 \framebox{1} & 3 & -1 \\
 -1 & -1 & 1  \\
\end{array}\right]}   \not\in \mathcal{L}$,
where $D_{4}$ and $D_{3}$ are $4 \times 4$ and  $3 \times 3$ matrices defined as in (\ref{matDdef}).
\finbox
\end{example}

\subsection{Convolution}
\label{SCconvol}

The (infimal) {\em convolution} of two functions
$f_{1}, f_{2}: \ZZ\sp{n} \to \Rinf$
is defined by
\begin{equation} \label{f1f2convdef}
(f_{1} \conv f_{2})(x) =
 \inf\{ f_{1}(y) + f_{2}(z) \mid x= y + z, \  y, z \in \ZZ\sp{n}  \}
\qquad (x \in \ZZ\sp{n}) ,
\end{equation}
where it is assumed that the infimum 
is bounded from below (i.e., $\not= -\infty$) for every $x \in \ZZ\sp{n}$.
The {\em Minkowski sum} of two sets $S_{1}$, $S_{2} \subseteq \ZZ\sp{n}$ 
is defined by
\begin{equation} \label{minkowsumZdef}
S_{1}+S_{2} = \{ y + z \mid y \in S_{1}, z \in S_{2} \} .
\end{equation}
The indicator function of the Minkowski sum
coincides with the convolution of the respective indicator functions,
i.e., 
$\delta_{S_{1}+S_{2}} =  \delta_{S_{1}} \conv \delta_{S_{1}}$.

Example \ref{EXmltMdim3set} below shows the following facts. 
Recall that a multimodular set means a set whose indicator function is multimodular
(Remark \ref{RMmmset})
and that a separable convex function is multimodular (Proposition \ref{PRsepmmfn}).

\begin{itemize}
\item

The Minkowski sum of a multimodular set and an integer interval (box)
is not necessarily a multimodular set.

\item
The convolution $f \conv \varphi$ of a multimodular function $f$
and a separable convex function $\varphi$ is not necessarily a multimodular function.

\item
The convolution $f_{1} \conv f_{2}$ of two multimodular functions $f_{1}$ and $f_{2}$
is not necessarily a multimodular function.
\end{itemize}

\begin{example}  \rm \label{EXmltMdim3set}
Let
$S_{1} = \{ (0,0,0), (1,0,-1) \}$ and
$S_{2} = \{ (0,0,0), (0,1,0) \}$,
where $S_{2}$ is an integer interval.
Both $S_{1}$ and $S_{2}$ are multimodular, but 
their Minkowski sum
$S_{1} + S_{2} = \{ (0,0,0), (1,0,-1),  \allowbreak    (0,1,0), (1,1,-1) \}$ 
is not multimodular.
We can check this directly or via transformation 
to $T_{i} = \{ D\sp{-1} x \mid x \in S_{i} \}$
for $i=1,2$.
We have
$T_{1} = \{ (0,0,0), (1,1,0) \}$ and $T_{2} = \{ (0,0,0), (0,1,1) \}$,
which are easily seen to be \Lnat-convex.
But their Minkowski sum 
$T_{1} +  T_{2} = \{(0, 0, 0), (0, 1, 1),  \allowbreak    (1, 1, 0), (1, 2, 1)\}$
is not \Lnat-convex,
since for $p=(0, 1, 1)$ and $q=(1, 1, 0)$ in $T_{1} + T_{2}$,
we have
$\left\lceil (p+q)/2 \right\rceil = (1, 1, 1) \not\in T_{1} + T_{2}$
and
$\left\lfloor (p+q)/2 \right\rfloor = (0, 1, 0) \not\in T_{1} + T_{2}$.
Since $T_{1} +  T_{2} =  \{ D\sp{-1} x \mid x \in S_{1} + S_{2} \}$,
this means that $S_{1} +  S_{2}$ is not multimodular.
It it mentioned that this example is based on
the example for \Lnat-convex sets given in 
\cite[Note 5.11]{Mdcasiam} and \cite[Example 3.11]{MS01rel}.
\finbox
\end{example}

\section{Concluding Remarks}
\label{SCconclrem}

Multimodular functions have been used as a fundamental tool 
to analyze recurrence relations
in the literature of queueing theory, discrete-event systems, and operations research.
In some analysis, propagation or stability of multimodularity 
through  recurrence formulas plays a critical role.
A recurrence formula consists of various kinds of operations,
some of which preserve multimodularity and others not.
The projection operation (partial minimization) is closely related to 
the Bellman equation in dynamic programming,
and the assumption of $U$ being an interval (consecutive variables)  
in Proposition \ref{PRmmfnprojinterval} is quite natural in this interpretation.
The reversal of the ordering of variables in Proposition \ref{PRmmfnvarinv}
corresponds to the reversal of ``time'' in recurrence relations.
It is hoped that the results of this paper will find applications
in concrete problems in operations research.

The known facts about fundamental operations on discrete convex functions,
including those obtained in this paper,
are summarized in Table~\ref{TBoperation2dcfnZ}.

\addtolength{\tabcolsep}{-4pt}
\begin{table}
\begin{center}
\caption{Fundamental operations on discrete convex functions}
\label{TBoperation2dcfnZ}

\

{\small
\begin{tabular}{l|cc|cc|cc|cc|l}
 Discrete  & \multicolumn{2}{c|}{Variables} & Restric-  & Projec- & \multicolumn{2}{c|}{Addition} & \multicolumn{2}{c|}{\!\! Convolution} & Reference
 \\  \cline{6-9} \cline{2-3}
 \qquad convexity &  Permut. \!\! &  \! Scaling & tion & tion  	 
 & \small $f + \varphi$ & \!\! \small $f_{1} + f_{2}$ & \!\! \small $f \conv \varphi$ & \!\!\!\!  \small $f_{1} \conv f_{2}$ &
\\ \hline
 Separable conv & Y \ \ & Y \ \ & Y \ \  & Y \ \  & Y \ \  & Y \ \  & Y \ \  & Y \ \ &
\\ 
 Integrally conv & Y \ \ &  \ \ \textbf{\textit{N}} & Y \ \  & Y \ \  & Y \ \  & \ \  \textbf{\textit{N}} & Y \ \  & \ \  \textbf{\textit{N}}
    & \cite{MM17projcnvl,MMTT17proxIC,MS01rel}
\\ 
\Lnat-convex & Y \ \ & Y \ \  & Y \ \  & Y \ \  & Y \ \  & Y \ \  & Y \ \  & \ \  \textbf{\textit{N}} 
    & \cite{Mdcasiam}
\\ 
{\rm L}-convex  & Y \ \ & Y \ \ & \ \  \textbf{\textit{N}}  & Y \ \  & \ \  \textbf{\textit{N}}
 & Y \ \  & Y \ \  & \ \  \textbf{\textit{N}} 
    & \cite{Mdcasiam}
\\ 
\Mnat-convex  & Y \ \ &  \ \ \textbf{\textit{N}} &  Y \ \  & Y \ \  & Y \ \  & \ \  \textbf{\textit{N}} & Y \ \  & Y \ \ 
    & \cite{Mdcasiam}
\\ 
M-convex & Y \ \ &  \ \ \textbf{\textit{N}} & Y \ \  & \ \  \textbf{\textit{N}} & Y \ \  & \ \  \textbf{\textit{N}} & \ \ \textbf{\textit{N}} & Y \ \ 
    & \cite{Mdcasiam}
\\ \hline
\small
 & \ \   &  \ \ &  Y \ \   & \ \    & Y \ \  & Y \ \  & \ \    & \ \   
    & \cite{AGH00}
\\ 
\small
\framebox{Multimodular} & \ \  \textbf{\textit{N}} & Y: Prop.\ref{PRmmfnvarscaling}\ \ & alt. proof & \ \  \textbf{\textit{N}}  &  \ \  &  \ \  & \ \  \textbf{\textit{N}}  & \ \  \textbf{\textit{N}} 
    & this paper 
\\  
\small
 & \ \  Y*:Prop.\ref{PRmmfnvarinv} & \ \ & \ \  & \ \  \ \   &  \ \  &  \ \  & \ \  \ \   & \ \  \ \  
    & \cite{Haj85} 
\\
\small
 & \ \  alt. proof &  \ \ &   \ \   & \ \  Y*:Prop.\ref{PRmmfnprojinterval}  &  \ \  &  \ \  & \ \     & \ \  
    & this paper 
\\ \hline 
\small 
Globally d.m.c.&  Y \ \ & Y \ \ &  Y \ \   & Y \ \  & Y \ \  & Y \ \  & \ \  \textbf{\textit{N}} & \ \  \textbf{\textit{N}} 
    & \cite{MMTT17dmpc}
\\ 
\small
Locally d.m.c.  &  Y \ \ & Y \ \ &  Y \ \   &  Y \ \  & Y \ \  & Y \ \  & \ \  \textbf{\textit{N}}  & \ \  \textbf{\textit{N}} 
    & \cite{MMTT17dmpc}
\\ 
M-conv (jump) &  Y \ \ & \ \ \textbf{\textit{N}} & Y \ \   & Y \ \  & Y \ \  & \ \  \textbf{\textit{N}} & Y \ \  & Y \ \ 
    & \cite{KMT07jump,Mmjump06}
\\ \hline
\multicolumn{5}{l}{d.m.c.: discrete midpoint convex,} 
    & \multicolumn{5}{l}{$\varphi$: separable convex}
\\
\multicolumn{9}{l}{Y: Discrete convexity (of that kind) is preserved, \ \ \textbf{\textit{N}}: Not preserved} \\
\multicolumn{9}{l}{Y*: Discrete convexity (of that kind) is preserved in some cases} \ \  
\end{tabular}
\addtolength{\tabcolsep}{4pt}
}
\end{center}
\end{table}


\end{document}